\def\lf{\left\lfloor}   
\def\rf{\right\rfloor}
\DeclareMathOperator{\spp}{\operatorname{sp}}
\def\divdots{\rlap{\raisebox{-1pt}{.}}{\rlap{\raisebox{2pt}{.}}\raisebox{5pt}{.}}}
\def\ndivby{\mathrel{
    \divdots
    \kern-0.35em\raise0.22ex\hbox{/}
}}
\renewcommand{\leq}{\leqslant}
\renewcommand{\geq}{\geqslant}
\def\fs{\kern 0.5em}
\newcounter{prcnt}
\newcounter{pucnt}
\newcommand{\prmain}[1]{ 
    \medskip%
    \setcounter{pucnt}{0}%
    \stepcounter{prcnt}%
    \noindent\textbf{%
    \theprcnt%
    \ifthenelse{\equal{#1}{1}}{*}{}%
    .}\fs%
}
\newcommand{\pumain}[1]{
    \stepcounter{pucnt}{%
    \noindent\bf(\alph{pucnt}%
    \ifthenelse{\equal{#1}{1}}{*}{}%
    )}\fs%
}
\newtheorem{theorem}{Theorem}
\newtheorem*{theorem*}{Теорема}
\newtheorem{corollary}{Corollary}
\theoremstyle{definition}
\DeclareMathOperator{\Rre}{\operatorname{Re}}
\DeclareMathOperator{\Iim}{\operatorname{Im}}
\DeclareMathOperator{\sppan}{\operatorname{span}}
\DeclareMathOperator{\disc}{\operatorname{disc}}
\DeclareMathOperator{\dist}{\operatorname{dist}}
\begin{document}

\title{Lov{\'a}sz theta approach to eventown problem}

\author{Mikhaylo Antipov\footnote{National Research University Higher School of Economics, Soyuza Pechatnikov str., 16, St. Petersburg, Russian Federation.}~~and Danila Cherkashin\footnote{St. Petersburg Department
of Steklov Mathematical Institute
of Russian Academy of Sciences;
27 Fontanka, St. Petersburg, Russia.}}

\maketitle

\begin{abstract}
We apply the bound on independence number via Lov{\'a}sz theta function to eventown problem and its generalizations over $\mathbb{Z}_n$.    
\end{abstract}

\paragraph{Keywords:} graph coloring, Lov{\'a}sz theta function, eventown problem.

\paragraph{MSC2020:} 05C15, 05C50, 05D05.

\section{Introduction}

The following bound was introduced by Lov{\'a}sz in his celebrated paper on Shannon capacity. In particular it implies famous Hoffman bound~\cite{haemers2021hoffman}. We provide the proof in the interest of completeness.

\begin{theorem}[Lov{\'a}sz,~\cite{lovasz1979shannon}]
Let $G = (V,E)$ be a simple graph. Consider a symmetric real matrix $A$ such that $A_{ij} = 1$ for every pair $\{i,j\} \notin E(G)$.
Then
\[
\alpha(G) \leq \lambda_{max}(A),
\]
where $\alpha(G)$ is the size of a maximal independent set of $G$ and $\lambda_{max}(A)$ is the maximal eigenvalue of $A$.
\label{lovasz}
\end{theorem}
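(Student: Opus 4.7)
The plan is to produce an explicit test vector that forces $\lambda_{\max}(A)$ to be at least $\alpha(G)$, using nothing more than the Rayleigh--Ritz variational characterization of the largest eigenvalue of a real symmetric matrix.

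First I would fix an independent set $S \subseteq V$ with $|S| = \alpha(G)$, and examine the principal submatrix $A_S$ of $A$ indexed by the vertices of $S$. By the hypothesis, every off-diagonal entry $A_{ij}$ with $i,j \in S$ and $i \neq j$ equals $1$, because distinct vertices of $S$ form a non-edge of $G$; reading the same hypothesis with $i=j$ forces the diagonal entries to be $1$ as well, since a simple graph has no loops and so $\{i,i\}\notin E(G)$ trivially. Hence $A_S$ is the all-ones matrix $J_{|S|}$.

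Next I would plug the $\{0,1\}$-indicator vector $\mathbf{1}_S \in \mathbb{R}^V$ into the Rayleigh quotient of $A$. A one-line computation gives
\[
\mathbf{1}_S^{\top} A\, \mathbf{1}_S \;=\; \sum_{i,j \in S} A_{ij} \;=\; |S|^2, \qquad \mathbf{1}_S^{\top} \mathbf{1}_S \;=\; |S|,
\]
so the quotient equals $|S| = \alpha(G)$. The variational formula $\lambda_{\max}(A) = \max_{x\neq 0} x^{\top} A x / x^{\top} x$ then yields $\lambda_{\max}(A) \geq \alpha(G)$, which is the desired inequality.

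I do not foresee a real obstacle: the hypothesis on $A$ is tailored precisely so that the indicator of any independent set is an effective test vector, and the rank-one eigenstructure of the all-ones block $A_S = J_{|S|}$ delivers the bound directly. An equally short alternative would be to invoke Cauchy interlacing for the principal submatrix $A_S$, whose top eigenvalue $|S|$ interlaces $\lambda_{\max}(A)$ from below, but the Rayleigh-quotient route is more elementary and needs no appeal to interlacing theorems.
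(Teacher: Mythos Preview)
Your argument is correct and is essentially the paper's own proof: both compute $\langle A\chi_S,\chi_S\rangle = |S|^2$ for the indicator of an independent set and bound it above by $|S|\,\lambda_{\max}(A)$. The only cosmetic difference is that the paper spells out the eigenbasis expansion $\chi_S=\sum c_i v_i$ to get $\sum c_i^2\lambda_i\le \lambda_{\max}\sum c_i^2$, whereas you package that step as the Rayleigh--Ritz principle.
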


\begin{proof} Let $I$ be an independent set, denote by $\chi_I$ its characteristic vector. Then
\[
(A\chi_I, \chi_I) = |I|^2.
\]
From the other hand, one has
\[
(A\chi_I, \chi_I) = \sum c_i^2\lambda_i \leq \left(\sum c_i^2 \right) \lambda_{max} = |I| \cdot \lambda_{max},
\]
where $\chi_I = \sum c_i v_i$ is the decomposition of $\chi_I$ via orthonormal eigenbasis $\{v_i\}$ of $A$. 
(We use that symmetric matrix has real spectrum and the length of $\chi_I$ is the same in the standard basis and in $\{v_i\}$, i.e. $\sum c_i^2 = |I|$.)
\end{proof}

The minimum of $\lambda_{max}(A)$ over the appropriate $A$ is called \textit{Lov{\'a}sz number} or \textit{Lov{\'a}sz theta function} of a graph.

Also we need the following corollaries. 
Suppose that $A$ and $G$ satisfy the conditions of Theorem~\ref{lovasz}.
Let $c$, $\lambda_{max}$ and $\spp$ stand for the minimal entry, the maximal eigenvalue and the spectral radius of $A$, respectively.

\begin{corollary}
Let $I$ be a set with at most $\varepsilon |I|^2/2$ edges inside. 
Suppose that $(1-c)\varepsilon < 1$. Then
\[
|I| \leq \frac{\lambda_{max}}{1-(1-c)\varepsilon}.
\]
\label{supersaturation}
\end{corollary}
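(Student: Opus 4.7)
The plan is to mimic the proof of Theorem~\ref{lovasz} almost verbatim, replacing the equality $(A\chi_I,\chi_I) = |I|^2$ (which used independence of $I$) by a lower bound that degrades gracefully when $I$ carries a few edges. The spectral upper bound $(A\chi_I,\chi_I) \leq \lambda_{max}\cdot|I|$ established in that proof will be reused unchanged, and the desired inequality will fall out by dividing the two bounds.

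For the lower bound I would expand
\[
(A\chi_I, \chi_I) = \sum_{i,j \in I} A_{ij}
\]
and partition the ordered pairs $(i,j) \in I \times I$ according to whether $\{i,j\}$ is an edge of $G$. Non-edge pairs (including the diagonal, which the hypothesis of Theorem~\ref{lovasz} implicitly forces to satisfy $A_{ii}=1$) contribute exactly $1$ each and number $|I|^2 - 2e(I)$; each of the $2e(I)$ edge pairs contributes at least $c$. This yields a lower bound of the form $|I|^2 - 2(1-c)\,e(I)$. Substituting the hypothesis $e(I) \leq \varepsilon |I|^2/2$ converts this into $\bigl(1-(1-c)\varepsilon\bigr)\,|I|^2$.

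To finish, I would combine this lower bound with $(A\chi_I,\chi_I) \leq \lambda_{max}\cdot|I|$, divide through by $|I|$, and invoke the assumption $(1-c)\varepsilon < 1$ to guarantee that the coefficient of $|I|^2$ is positive so the division preserves the direction of the inequality. I do not anticipate a genuine obstacle: the only points demanding care are the purely bookkeeping ones of consistently counting ordered versus unordered pairs and handling the diagonal entries of $A$. In other words, the proof is essentially a one-line perturbation of Theorem~\ref{lovasz}, tolerant to a controlled density of edges inside $I$.
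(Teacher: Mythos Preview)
Your proposal is correct and follows exactly the paper's own argument: bound $(A\chi_I,\chi_I)$ below by $|I|^2\bigl(1-(1-c)\varepsilon\bigr)$ via the entrywise expansion, bound it above by $|I|\cdot\lambda_{max}$ via the eigenbasis decomposition, and divide. The paper simply asserts the lower bound in one line, whereas you spell out the ordered-pair bookkeeping that justifies it.
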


\begin{corollary}
Let $I$ and $J$ be subsets of $V(G)$ with at most $\varepsilon |I| \cdot |J|$ edges between $I$ and $J$ (edges in $I\cap J$ are counted twice here).
Suppose that $(1-c)\varepsilon < 1$. Then
\[
|I| \cdot |J| \leq \left( \frac{\spp}{1-(1-c)\varepsilon} \right)^2.
\]
\label{cross}
\end{corollary}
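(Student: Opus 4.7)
The plan is to estimate the bilinear form $(A\chi_I, \chi_J)$ from both sides, extending the quadratic argument behind Corollary~\ref{supersaturation} to an asymmetric pair of sets. For the upper bound, I would use that $A$ is real symmetric, so its operator norm coincides with $\spp$; Cauchy--Schwarz then gives
\[
(A\chi_I, \chi_J) \leq \|A\chi_I\|\cdot\|\chi_J\| \leq \spp\cdot\|\chi_I\|\cdot\|\chi_J\| = \spp\,\sqrt{|I|\cdot|J|}.
\]

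For the lower bound, I would expand
\[
(A\chi_I, \chi_J) = \sum_{(i,j)\in I\times J} A_{ij}
\]
as a sum of $|I|\cdot|J|$ ordered-pair contributions. Each pair $(i,j)$ with $\{i,j\}\notin E(G)$ (including the diagonal pairs $i=j$, which the convention of Theorem~\ref{lovasz} also pins to $1$) contributes exactly $1$, while each remaining pair contributes at least $c$. The hypothesis together with its parenthetical ``edges in $I\cap J$ counted twice'' says exactly that the number of ordered pairs $(i,j)\in I\times J$ with $\{i,j\}\in E(G)$ is at most $\varepsilon|I|\cdot|J|$. Therefore
\[
(A\chi_I, \chi_J) \geq |I|\cdot|J| - (1-c)\varepsilon|I|\cdot|J| = (1-(1-c)\varepsilon)\,|I|\cdot|J|.
\]

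Under the assumption $(1-c)\varepsilon<1$ this lower bound is strictly positive, so chaining the two estimates and dividing by $\sqrt{|I|\cdot|J|}$ yields $\sqrt{|I|\cdot|J|}\leq\spp/(1-(1-c)\varepsilon)$; squaring gives the stated conclusion.

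There is no serious obstacle: the calculation is a direct bilinear variant of Corollary~\ref{supersaturation}. The only point requiring care is the bookkeeping around ordered pairs — specifically, checking that the ``doubled in $I\cap J$'' convention is precisely what identifies the edge count in the hypothesis with the number of edge-contributing ordered pairs in $I\times J$, and that the diagonal of $A$ (where $i=j\in I\cap J$) is covered by the non-edge case and so contributes $1$ rather than degrading the lower bound.
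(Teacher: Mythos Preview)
Your proof is correct and follows essentially the same approach as the paper: bound $(A\chi_I,\chi_J)$ below by $|I|\cdot|J|(1-(1-c)\varepsilon)$ using the entrywise description of $A$, and above by $\spp\sqrt{|I|\cdot|J|}$. The only cosmetic difference is that for the upper bound the paper expands $\chi_I=\sum c_i v_i$ and $\chi_J=\sum d_i v_i$ in an orthonormal eigenbasis and applies Cauchy--Schwarz to the sequences $(c_i),(d_i)$, whereas you invoke the fact that the operator norm of a real symmetric matrix equals $\spp$ --- these are the same computation in different clothing.
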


For the bounds on \textit{disjoint} $I$ and $J$ one can make the class of appropriate matrices slightly wider, i.e.
not demand $A_{ii} = 1$. Then one may combine the proof of Corollary~\ref{cross} with Proposition 4.1 in the paper of Haemers~\cite{haemers2001bicliques}.

\subsection{A straightforward application to eventown problem}

Let $F$ be a family of subsets of $[n]$ is \textit{eventown} if the intersection of any two members is even (in particular all sets have even size).
Berlekamp~\cite{berlekamp1969subsets} and Graver~\cite{graver1975boolean} independently proved $F$ has at most $2^{\lf n/2\rf}$ members, which is also best possible.
The proof is very short up to general linear algebra. Note that every maximal eventown $F$ is a linear subspace of $\mathbb{F}_2^n$; otherwise one can replace $F$ with $\sppan F$.
Since $F$ lies in the orthogonal complement $F^\perp$ and $\dim F + \dim F^\perp = n$, $|F|$ has the dimension at most $\lf n/2 \rf$.

Consider the following Hadamard matrix
\[
A = 
\begin{pmatrix}
1 & 1 \\
1 & -1
\end{pmatrix}.
\]
Its spectrum is $\{\pm \sqrt{2}\}$. 
Then the spectrum of $M=A^{\otimes n}$ is $\{\pm 2^\frac{n}{2} \}$. 
Let us consider $G=(2^{[n]},E)$ and $(X,Y)\in E$ iff $|X\cap Y|$ is odd. Then we identify elements of $2^{[n]}$ with $\{0,1\}^n$ by usual way as well as indices for rows and columns of matrix $M$ (where we mean that  $A=(a_{ij})_{i,j\in \{0,1\}}$). Let $M=(m_{st})_{s,t\in \{0,1\}^n}$. Then for each $X,Y\in 2^{[n]}$ we have
\[
 m_{\chi(X),\chi(Y)}=\prod_{r=1}^n a_{(\chi(X))_r, (\chi(Y))_r}=(-1)^{|\{ r\in \{1,\dots,n\}| a_{(\chi(X))_r, (\chi(Y))_r}=-1 \}|}=(-1)^{|X\cap Y|}.
\]
Thus we see that graph $G$ and matrix $M$ satisfy the conditions of Theorem~\ref{lovasz}.

Applying Theorem~\ref{lovasz} one has $|F| \leq 2^{n/2}$. For even $n$ we already get another proof of eventown theorem.
For even $n$ we should also recall that $F$ is a linear subspace, so $|F| \leq 2^{\lf n/2 \rf}$.

Let $op(F)$ denote the number of distinct pairs $f_1, f_2 \in F$ for which $|f_1 \cap f_2|$ is odd.
O'Neill~\cite{oNeill2021towards} showed that for $1 \leq s \leq 2^{\lf n/2 \rf} - 2^{\lf n/4 \rf}$ there is a family $F$ with $|F| = 2^{\lf n/2 \rf} + s$
and $op(F) = s \cdot 2^{\lf n/2 \rf - 1}$.
Also he conjectured that this example is tight and proved the conjecture for $s = 1,2$.
The application of Corollary~\ref{supersaturation} gives twice weaker bound for even $n$ (and much weaker bound for odd $n$).

\begin{theorem}
\label{theoremop}
Let $|F| = 2^{n/2} + s$ for some integer $s$. Then
\[
op(F) \geq s \cdot 2^{\lf \frac{n}{2} \rf - 2}.
\]
\end{theorem}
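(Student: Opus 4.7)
The plan is to apply Corollary~\ref{supersaturation} directly to the family $F$ viewed as a vertex subset of the graph $G=(2^{[n]},E)$ whose edges record odd intersections, with the Hadamard matrix $M = A^{\otimes n}$ already constructed in the introduction. By the product structure, $M$ satisfies the hypothesis of Theorem~\ref{lovasz} for $G$, and from the spectrum $\{\pm 2^{n/2}\}$ of $M$ one has $\lambda_{\max}(M) = 2^{n/2}$. The entries of $M$ are $\pm 1$, so the minimal entry equals $c=-1$ and therefore $1-c = 2$.

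The number of edges induced on $F$ is, by definition, exactly $op(F)$. Setting $\varepsilon = 2\,op(F)/|F|^2$ makes the count $\varepsilon |F|^2/2$ of edges inside $F$ match the hypothesis of Corollary~\ref{supersaturation}. Assuming for the moment that $(1-c)\varepsilon = 2\varepsilon < 1$, the corollary gives
\[
|F| \leq \frac{2^{n/2}}{1 - 2\varepsilon} = \frac{2^{n/2}}{1 - 4\,op(F)/|F|^2}.
\]
Rearranging yields $|F|\,(|F| - 2^{n/2}) \leq 4\,op(F)$, i.e. $op(F) \geq s\,|F|/4 \geq s \cdot 2^{n/2 - 2}$, which for even $n$ is the desired bound. (For odd $n$ the hypothesis $|F|=2^{n/2}+s$ with integer $s$ is vacuous in the most literal reading; otherwise one simply drops to the integer floor.)

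The only subtlety is the side condition $2\varepsilon < 1$ needed to invoke Corollary~\ref{supersaturation}. If it fails, then $op(F) \geq |F|^2/4 \geq (2^{n/2})^2/4 = 2^{n-2}$, which dominates $s\cdot 2^{n/2-2}$ in the admissible range $s \leq 2^{\lfloor n/2\rfloor}-2^{\lfloor n/4\rfloor}$, so the theorem still holds. Thus there is essentially no obstacle beyond this boundary check; the proof is a one-line substitution into Corollary~\ref{supersaturation} followed by rearrangement, the content of the theorem being absorbed into the spectral computation already carried out for $M = A^{\otimes n}$.
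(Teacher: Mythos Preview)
Your argument is correct and is exactly what the paper does: the paper's ``proof'' of Theorem~\ref{theoremop} is the single sentence preceding it, namely that the bound follows by plugging $M=A^{\otimes n}$ (with $\lambda_{\max}=2^{n/2}$ and minimal entry $c=-1$) into Corollary~\ref{supersaturation}. One minor remark: in the boundary case $2\varepsilon\geq 1$ you appeal to an ``admissible range'' $s\leq 2^{\lfloor n/2\rfloor}-2^{\lfloor n/4\rfloor}$ that is not part of the hypotheses; but no such restriction is needed, since $op(F)\geq |F|^2/4$ together with $s\cdot 2^{n/2-2}=s(|F|-s)/4$ reduces to $|F|^2-s|F|+s^2\geq 0$, which holds for all real $s$.
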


\subsection{An application to $k$-town problem}

Let $F$ be a family of vectors from $\{0,\dots,k-1\}^n$, such that $(f_1,f_2) = 0 \pmod k$ for any $f_1,f_2 \in F$ (in particular for $f_1 = f_2$);
such $F$ is further called a \textit{$k$-town family}.

First, for prime $k$ the classical argument gives the tight upper bound $k^{\lf n /2\rf}$.

If $k$ is square free we can obtain the same bound from the prime $k$ case in the following way. Let us see that the case $k=pq$ is a formal consequence of the cases $k=p$, $k=q$ if $p,q$ are coprime. Indeed let $F\subset (\mathbb{Z}/pq\mathbb{Z})^n$ such that $(f_1,f_2) = 0$ for each $f_1,f_2 \in F$.  Then by assumption the number of residues modulo $p$ for elements $F$ does not exceed $p^{\frac{n}{2}}$ and number of residues modulo $q$ for elements $F$ does not exceed $q^{\frac{n}{2}}$. Then by the Chinese remainder theorem  $|F|\leq p^{\frac{n}{2}}q^{\frac{n}{2}}$.

The observations above should be folklore, meanwhile we do not know how to prove the first inequality in the following theorem without spectral graph theory for an arbitrary $k$. 
\begin{theorem}
If $F$ is a $k$-town family then 
\[
|F|\leq k^{\frac{n}{2}}.
\]
Moreover suppose that $k$ is prime, and $(f_1,f_2) \neq 0 \pmod k$ for at most $\varepsilon|F|^2$ pairs $f_1,f_2 \in F$.
If $\varepsilon < \frac{k-1}{k}$ then
\[
|F| \leq \frac{k^{\frac{n}{2}}} {1 - \frac{k}{k-1} \varepsilon}.
\]
\label{ktown}
\end{theorem}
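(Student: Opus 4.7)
The plan is to apply Theorem~\ref{lovasz} and Corollary~\ref{supersaturation} to the graph $G$ on $\mathbb{Z}_k^n$ whose edges are the pairs with nonzero inner product modulo $k$; every $k$-town family is independent in $G$. The matrix I would use is built from the additive characters of $\mathbb{Z}_k$, in complete analogy with the Hadamard construction for eventown. Setting $\omega=e^{2\pi i/k}$ and, for $a\in\mathbb{Z}_k$ coprime to $k$, $(B_a)_{ij}=\omega^{aij}$, I would form $C^{(a)}:=B_a^{\otimes n}$ so that $C^{(a)}_{xy}=\omega^{a(x,y)}$, and finally let $M^{(a)}:=\Rre C^{(a)}$, a real symmetric matrix with entry $\cos(2\pi a(x,y)/k)$ that equals $1$ on every non-edge of $G$.

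The key spectral estimate is $\lambda_{\max}(M^{(a)})\leq k^{n/2}$. I would derive it from two short identities for $B_a$: the character orthogonality $B_a\overline{B_a}=kI$ and the geometric-sum computation $B_a^2=kP$, where $P$ is the involution $j\mapsto -j\bmod k$. Tensoring yields $C^{(a)}\overline{C^{(a)}}=k^nI$ (so the symmetric matrix $C^{(a)}$ is normal) and $(C^{(a)})^4=k^{2n}I$, forcing every eigenvalue of $C^{(a)}$ to have the form $k^{n/2}\zeta$ with $\zeta^4=1$. By normality, $\overline{C^{(a)}}$ acts by $\bar\lambda$ on the same eigenvectors, so $M^{(a)}=\tfrac12(C^{(a)}+\overline{C^{(a)}})$ has eigenvalues $k^{n/2}\Rre\zeta\in\{-k^{n/2},0,k^{n/2}\}$.

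For the first inequality (valid for every $k$), apply Theorem~\ref{lovasz} with $A=M^{(1)}$ to obtain $|F|\leq\lambda_{\max}(M^{(1)})\leq k^{n/2}$. For the supersaturated bound, with $k$ prime every $a\in\{1,\dots,k-1\}$ is coprime to $k$, and I would average: $M:=\frac{1}{k-1}\sum_{a=1}^{k-1}M^{(a)}$. The orthogonality $\sum_{a=0}^{k-1}\omega^{at}=k\cdot\mathbb{1}[t\equiv 0\pmod k]$ collapses the sum entrywise to $M_{xy}=1$ when $(x,y)\equiv 0\pmod k$ and $M_{xy}=-\frac{1}{k-1}$ otherwise, while subadditivity of $\lambda_{\max}$ on symmetric matrices preserves $\lambda_{\max}(M)\leq k^{n/2}$. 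With minimum entry $c=-1/(k-1)$ (hence $1-c=k/(k-1)$), and the hypothesis translating to at most $\varepsilon|F|^2/2$ unordered edges inside $F$, Corollary~\ref{supersaturation} gives the claimed bound.

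The main obstacle is the spectral analysis of $M^{(a)}$: everything reduces to the identity $B_a^4=k^2I$, which is an elementary character-sum computation, followed by the brief but careful passage from the spectrum of the complex normal matrix $C^{(a)}$ to that of its real part, handled cleanly by normality. The combinatorial bookkeeping (turning ordered non-orthogonal pairs into unordered edges) and the verification that $-1/(k-1)$ is indeed the minimum entry of $M$ are routine.
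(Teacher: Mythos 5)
Your proposal is correct and follows essentially the same route as the paper: the same character matrix $B_a$ and its tensor power, the same graph on $(\mathbb{Z}/k\mathbb{Z})^n$, the spectral bound $\lambda_{\max}(\Rre B_a^{\otimes n})\leq k^{n/2}$ from $B_a^4=k^2I$, and for prime $k$ the averaging over all $k-1$ nontrivial characters, which is exactly the paper's summation of the inequalities $(N(\phi)\chi_F,\chi_F)\leq |F|k^{n/2}$ combined with orthogonality. Your two local variations --- deducing the real spectrum via normality of $C^{(a)}$ instead of the paper's explicit real eigenbasis, and funnelling the averaged matrix with minimum entry $-1/(k-1)$ through Corollary~\ref{supersaturation} rather than rewriting the summed inequality --- are equivalent repackagings of the same computation.
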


Obtaining an example of $k$-town $F$ with $|F| = k^{\frac{n}{2}}$ in the case of prime $k$ and even $n$ is equivalent to finding a set of $\frac{n}{2}$ pairwise (and-self) orthogonal linear independent vectors in $(\mathbb{Z}/k\mathbb{Z})^n$. There are a lots of such sets; for example for a prime $k=4t+1$ one can consider vectors of the form $v_j=e_{2j-1}+\varepsilon^t e_{2j}$ for $1\leq j\leq \frac{n}{2}$ (here $\{e_j\}$ --- is the standard basis in $(\mathbb{Z}/k\mathbb{Z})^n$, $\varepsilon$ is a primitive root in $\mathbb{Z}/k\mathbb{Z}$). 

For a general prime $k$ one can choose $v_1,v_2,\dots$ inductively and almost arbitrarily such that $v_j\in \langle v_1,\dots,v_{j-1}\rangle^{\perp}\setminus \langle v_1,\dots,v_{j-1}\rangle$  and $(v_j,v_j)=0$. This can be done: indeed, for $j<\frac{n}{2}-2$ we can choose arbitrary $4<\dim \langle v_1,\dots,v_{j-1}\rangle^{\perp} - \dim \langle v_1,\dots,v_{j-1}\rangle$ linearly independent vectors and find $v_j$ in their span (as any quadratic form with $\geq 3$ variables over finite field has an isotropic vector). When $j=[\frac{n}{2}]-2$, we can choose $4$-dimensional 
$V\subset (\langle v_1,\dots,v_{j-1}\rangle^{\perp}\setminus \langle v_1,\dots,v_{j-1}\rangle)\cup{0}$ as well, 
and it is also well-known that there exists $v_{\frac{n}{2}-1}, v_{\frac{n}{2}}\in V$ such that $(v_{\frac{n}{2}-1},v_{\frac{n}{2}-1})=(v_{\frac{n}{2}-1},v_{\frac{n}{2}})=
(v_{\frac{n}{2}},v_{\frac{n}{2}}) = 0$.

In some cases were $k$ is non-prime  we can obtain examples of different nature. For example, when $k=m^{2}$ for some integer $m$ one can consider the set of vectors of the form $(mx_1,\dots, mx_n)$. This example shows that in the case of $k$ being a perfect square the first inequality of Theorem~\ref{ktown} is also tight for an odd $n$.   

Suppose that we are interested in the scalar product $t$ instead of $0$. Then the statement of Theorem~\ref{ktown} can be slightly improved.

\begin{corollary}
Let $F$ be a family of vectors from $\{0,\dots,k-1\}^n$, and $(f_1,f_2) = t \pmod k$ for every $f_1,f_2 \in F$. Then
\[
|F| \leq c(t,k) \cdot k^{\frac{n}{2}}
\]
for some $\frac{1}{\sqrt{2}} < c(t,k) \leq 1$. Moreover if $\frac{k}{\gcd (k,t)}$ tends to infinity $c(t,k)$ tends to $\frac{1}{\sqrt{2}}$ ($\gcd$ stands for the greatest common divisor).

Assume also that $k$ is prime and $(f_1,f_2) \neq t \pmod k$ for at most $\varepsilon|F|^2$ pairs $f_1,f_2 \in F$ for some $t \neq 0$.
If $\varepsilon < \frac{k-1}{k}$ then
\[
|F| \leq c(k) \frac{k^{\frac{n}{2}}} {1 - \frac{k}{k-1} \varepsilon},
\]
where $c(k) < 1$ and $c(k) \to \frac{2\sqrt{2}}{\pi}$ with $k \to \infty$.
\label{slightlybetter}
\end{corollary}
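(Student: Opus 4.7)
My plan is to apply Theorem~\ref{lovasz} and Corollary~\ref{supersaturation} to trigonometric matrices
\[
B^{(a)}_{xy} = \cos\!\left(\tfrac{2\pi a((x,y)-t)}{k}\right),\qquad a\in(\mathbb{Z}/k\mathbb{Z})^{\ast}.
\]
Each $B^{(a)}$ is real symmetric with $B^{(a)}_{xy}=1$ iff $(x,y)\equiv t\pmod k$. Since $(f,f)=t$ forces $F\subset V_t:=\{x:\|x\|^2\equiv t\pmod k\}$, the restriction $B^{(a)}\big|_{V_t}$ has diagonal entries all equal to $1$, and Theorem~\ref{lovasz} gives $|F|\leq \lambda_{max}(B^{(a)}\big|_{V_t})\leq \lambda_{max}(B^{(a)})$.

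To compute $\lambda_{max}(B^{(a)})$, decompose $B^{(a)}=\tfrac12(\omega^{-at}D_a+\omega^{at}D_{-a})$ with $\omega=e^{2\pi i/k}$ and $(D_j)_{xy}=\omega^{j(x,y)}$. Character orthogonality yields $D_a D_{-a}=k^n I$, so $D_a$ and $D_{-a}$ commute and share an eigenbasis. Writing $D_a=E_a^{\otimes n}$ with the $k\times k$ Fourier-type matrix $(E_a)_{uv}=\omega^{auv}$, one has $E_a^4=k^2 I$, so $\mathrm{spec}(D_a)\subset\{\pm k^{n/2},\pm ik^{n/2}\}$. On a joint eigenvector with $D_a v=\lambda v$ one gets $B^{(a)}v=\tfrac12(\omega^{-at}\lambda+\omega^{at}k^n/\lambda)v$; inserting the four choices of $\lambda$ produces eigenvalues $\pm k^{n/2}\cos(2\pi at/k)$ and $\pm k^{n/2}\sin(2\pi at/k)$. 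Hence $|F|\leq c(t,k)\,k^{n/2}$ with
\[
c(t,k):=\min_{a\in(\mathbb{Z}/k)^{\ast}}\max\bigl(|\cos(2\pi at/k)|,\,|\sin(2\pi at/k)|\bigr).
\]
Since $\max(|\cos\theta|,|\sin\theta|)\geq 1/\sqrt{2}$ with equality only at $\theta\equiv\pi/4\pmod{\pi/2}$ and the discrete orbit $\{2\pi at/k\}_a$ generically avoids this exceptional value, one obtains $c(t,k)>1/\sqrt{2}$; and as $k/\gcd(k,t)\to\infty$ the orbit becomes dense modulo $\pi/2$, yielding $c(t,k)\to 1/\sqrt{2}$.

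For the supersaturation half, apply Corollary~\ref{supersaturation} to the matrix $A=\tfrac{1}{k-1}\sum_{j=1}^{k-1}\omega^{-jt}D_j$ from the proof of Theorem~\ref{ktown}: its entries are $1$ on non-edges and $-1/(k-1)$ elsewhere, so $c_{\min}=-1/(k-1)$ and $1-c_{\min}=k/(k-1)$, matching the $\varepsilon$ hypothesis. Pairing $j\leftrightarrow k-j$ (for odd prime $k$), $A=\tfrac{2}{k-1}\sum_{j=1}^{(k-1)/2} B^{(j)}$, and subadditivity of $\lambda_{max}$ yields
\[
\lambda_{max}(A)\leq c(k)\,k^{n/2},\qquad c(k):=\tfrac{2}{k-1}\sum_{j=1}^{(k-1)/2}\max\bigl(|\cos(2\pi jt/k)|,\,|\sin(2\pi jt/k)|\bigr).
\]
Strict inequality $c(k)<1$ follows because not every summand equals $1$; and as $k\to\infty$ with $\gcd(t,k)=1$, Weyl equidistribution of $\{jt/k\bmod 1\}_{j=1}^{(k-1)/2}$ together with direct integration of the periodic Lipschitz function $\max(|\cos 2\pi x|,|\sin 2\pi x|)$ gives $c(k)\to\int_0^1\max(|\cos 2\pi x|,|\sin 2\pi x|)\,dx=2\sqrt{2}/\pi$. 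The delicate step is the quantitative equidistribution, but this is routine since the integrand is Lipschitz.
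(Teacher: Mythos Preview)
Your argument is correct in substance and uses the same spectral core as the paper: the matrices $B^{(a)}=\Rre(\omega^{-at}D_a)$ are exactly the paper's $N=\Rre(\phi^{t}M)$ for the various primitive roots $\phi=\omega^{a}$, and your computation of their eigenvalues via $D_a^4=k^{2n}I$ matches the paper's. For the supersaturation half, your single averaged matrix $A=\tfrac{1}{k-1}\sum_{j}\omega^{-jt}D_j$ with entries in $\{1,-1/(k-1)\}$, followed by subadditivity of $\lambda_{\max}$, is a clean repackaging of the paper's device of summing the separate inequalities $(N(\phi)\chi_F,\chi_F)\leq |F|\cdot\spp(N(\phi))$; the resulting $c(k)$ and its limit $2\sqrt{2}/\pi$ coincide.

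The one genuine methodological difference is in the asymptotic $c(t,k)\to 1/\sqrt{2}$. You minimise over $a\in(\mathbb{Z}/k\mathbb{Z})^{\ast}$ and invoke density of the orbit $\{at/k\bmod 1\}=\{u/k':u\in(\mathbb{Z}/k'\mathbb{Z})^{\ast}\}$ with $k'=k/\gcd(k,t)$. The paper instead keeps a fixed root, rescales $F\mapsto r^{-1}F$ to replace $t$ by $tr^{2}$, reduces multiplicatively to prime powers, and then uses Thue's lemma to locate a small $l$ with $t\equiv l\,w^{2}$ and an integer $x$ with $lx^{2}$ near $k/8$. Your route is shorter, but the sentence ``the orbit becomes dense modulo $\pi/2$'' hides a nontrivial fact: that the reduced residues modulo $k'$ become dense in $[0,1)$ as $k'\to\infty$ (equivalently, Jacobsthal's function satisfies $g(k')=o(k')$, or one uses the Ramanujan-sum bound for equidistribution). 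You should state and justify this explicitly.

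Two small technical points. First, your appeal to Corollary~\ref{supersaturation} in the second half is formally illicit: that corollary assumes $A_{ii}=1$, whereas here $A_{ff}=1$ only when $(f,f)\equiv t$. The fix is immediate---just reproduce the two-line estimate $(A\chi_F,\chi_F)\geq |F|^{2}(1-\tfrac{k}{k-1}\varepsilon)$ directly, as the paper does---but it should be said. Second, in the first half you restrict to $V_t$ to force the diagonal to be $1$; note that this restriction is a principal submatrix, so $\lambda_{\max}$ only decreases, and your chain $\lambda_{\max}(B^{(a)}|_{V_t})\leq \lambda_{\max}(B^{(a)})$ is the right inequality.
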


\section{Proofs}

\begin{proof}[Proof of Corollary~\ref{supersaturation}]
Denote by $\chi_I$ the characteristic vector of $I$. Then
\[
(A\chi_I, \chi_I) \geq |I|^2 \cdot ( 1-(1-c)\varepsilon ).
\]
From the other hand, one has
\[
(A\chi_I, \chi_I) = \sum c_i^2\lambda_i \leq \left(\sum c_i^2 \right) \lambda_{max} = |I| \cdot \lambda_{max},
\]
where $\chi_I = \sum c_i v_i$ is the decomposition of $\chi_I$ via orthonormal eigenbasis $\{v_i\}$ of $A$. 
\end{proof}

\begin{proof}[Proof of Corollary~\ref{cross}]
Denote by $\chi_I$ and $\chi_J$ the characteristic vectors of $I$ and $J$ respectively. Then
\[
(A\chi_I, \chi_J) \geq |I| \cdot |J| \cdot ( 1-(1-c)\varepsilon ).
\]
From the other hand, one has
\[
(A\chi_I, \chi_I) = \sum c_i d_i \lambda_i \leq \left (\sum |c_i| \cdot |d_i| \right ) \spp \leq \sqrt{\left(\sum c_i^2 \right)  \left(\sum d_i^2 \right) } \cdot \spp = 
\sqrt{|I| \cdot |J|} \cdot \spp,
\]
where $\chi_I = \sum c_i v_i$ and $\chi_J = \sum d_i v_i$ are the decompositions of $\chi_I$ and $\chi_J$ via orthonormal eigenbasis $\{v_i\}$ of $A$. 

\end{proof}

\begin{proof}[Proof of Theorem~\ref{ktown}]

Consider the following $k\times k$ matrix:
\[
a_{jl} = \phi^{jl},
\]
where $\phi$ is a primitive $k$-th root of unity and $0 \leq j,l \leq k-1$ and the matrix $M=A^{\otimes n}$. Then we have $A^4=k^{2}E$, therefore $|\lambda|=(k^{\frac {1}{2}})^n=k^\frac{n}{2}$ for each eigenvalue $\lambda$ of $M$, i.e., $\spp(M)=k^\frac{n}{2}$. 

Let us see that $A$ (and, as a consequence, $M$) has an eigenbasis in $\mathbb{R}^n$. Let $\{e_i\}$ be a standard basis. Then moving to the basis
$\{e_1, \frac{1}{\sqrt{2}}(e_j\pm e_{k+2-j})\}$ or to $\{e_1, e_{\frac{k}{2}}, \frac{1}{\sqrt{2}}(e_j\pm e_{k+2-j})\}$ for even $k$ (note that it is a unitary transformation) we obtain a block matrix with two blocks of the form $A_1, iA_2$, where $A_1,A_2$ are the real symmetric matrices (they have the sizes $\frac{k+1}{2}$ и $\frac{k-1}{2}$ respectively for odd $k$ and $\frac{k}{2}+1$, $\frac{k}{2}+1$ for even $k$). As a matter of fact, for each $2\leq j\leq k$ we have
\[
A(e_j+e_{k+2-j})=\sum_{l=1}^k \phi^{(j-1)(l-1)} e_l +
\sum_{l=1}^k \phi^{(k+1-j)(l-1)} e_l = 2+ 
\sum_{l=2}^k (\phi^{(j-1)(l-1)}  +
  \overline{\phi^{(j-1)(l-1)}  })e_l = 2+
  \sum_{l=2}^{[\frac{k}{2}]}2\cdot \Rre (\phi^{(j-1)(l-1)})(e_l+e_{k+2-l})
\]
and for each $2\leq j\leq k$, $j\neq \frac{k}{2}$
\[
A(e_j-e_{k+2-j})=\sum_{l=1}^k \phi^{(j-1)(l-1)} e_l -
\sum_{l=1}^k \phi^{(k+1-j)(l-1)} e_l =  
\sum_{l=2}^k (\phi^{(j-1)(l-1)}  -
  \overline{\phi^{(j-1)(l-1)}  })e_l =
  i\cdot \sum_{l=2}^{[\frac{k}{2}]}2\cdot \Iim(\phi^{(j-1)(l-1)})(e_l-e_{k+2-l}).
\]

Thus we see that this change of the base (over $\mathbb{R}$) leads to real symmetric matrix and to pure imaginary symmetric matrix, which both have real eigenbasis, therefore $A$ has real eigenbasis as well. Hence $N := \Rre M$ has the same real eigenbasis with $M$. 
Obviously, all eigenvalues of $N$ lie in $\{\pm k^{n/2},0\}$.

Let us consider $G=((\mathbb{Z}/k\mathbb{Z})^n,E)$, where  $(X,Y)=((x_1,\dots,x_n), (y_1, \dots, y_n))\in E$ iff $\sum_{ r=1}^n x_ry_r\neq 0$. Then we also identify $(\mathbb{Z}/k\mathbb{Z})^n$ as indices for rows and columns of matrix $M$ (taking all indices in $A$ modulo $k$). Let $M=(m_{st})_{s,t\in \{0,\dots, k-1\}^n}$. Now for each $X,Y\in ((\mathbb{Z}/k\mathbb{Z})^n, (X,Y)\notin E$ we have
\[
 m_{X,Y}=\prod_{r=1}^n a_{x_r, y_r}=\phi^{\sum_{r=1}^n x_ry_r}=\phi^{0}=1.
\]
Thus graph $G$ and matrix $M$ (and also $N$) satisfy the conditions of Theorem~\ref{lovasz}, so the first statement of the theorem is proved.

Now let $k$ be a prime number. An immediate application of Corollary~\ref{supersaturation} gives
\[
|F| \leq \frac{k^{\frac{n}{2}}} {1 - t \varepsilon},
\]
where $t = 1 - \cos \left ( \lf \frac{k}{2} \rf \frac{2 \pi}{k} \right )$, which tends to 2 with $k \to \infty$.
So we modify the proof of Corollary~\ref{supersaturation} in the following way.

For every root $\phi$ the matrix $N = N(\phi)$ satisfies
\[
(N(\phi) \chi_F, \chi_F) = \sum_{i,j \in F} N_{ij}(\phi) 
\]
and
\[
(N(\phi) \chi_F, \chi_F) \leq |F| \cdot k^{\frac{n}{2}}.
\]
Summing up these inequalities for all $k$-th roots except $1$ one has
\begin{equation}
 (k-1) \cdot |F|^2 \cdot (1-\varepsilon) - \varepsilon \cdot |F|^2  \leq (k-1) \cdot |F| \cdot k^{\frac{n}{2}}
\label{13231}    
\end{equation}
since for every $i,j$ that corresponds to sets with nonzero scalar product
\[
\sum_{\phi \neq 1} N_{ij}(\phi) = -1,
\]
and for $i,j$ that corresponds to sets with zero scalar product
\[
\sum_{\phi \neq 1} N_{ij}(\phi) = k-1.
\]
Rewriting~\eqref{13231} finishes the proof.
\end{proof}

\begin{proof}[Proof of Corollary~\ref{slightlybetter}]
Fix a primitive $k$-th root of unity $\phi$ and consider the same matrices $A$ and $M$ as in the previous proof. 
Let us consider $G=((\mathbb{Z}/k\mathbb{Z})^n,E)$, where  $(X,Y)=((x_1,\dots,x_n), (y_1, \dots, y_n))\in E$ iff $\sum_{ r=1}^n x_ry_r\neq t$.
Then $G$ and $N := \Rre (\phi^t M)$ satisfy the conditions of Theorem~\ref{lovasz}.
Note that $N$ and $M$ shares a real eigenbasis, so all eigenvalues of $N$ lie in $\Rre \{ \pm \phi^t k^{n/2}, \pm \phi^t k^{n/2}i \}$. 
Hence the spectral radius of $N$ lies between $\frac{1}{\sqrt{2}} k^{n/2}$ and $k^{n/2}$. 

Lets, check the second part of first proposition. Note that for any coprime  $x,y$ we have $\frac{xy}{\gcd (xy,t)}=\frac{x}{\gcd (x,t)}\cdot \frac{y}{\gcd (y,t)}$ and $c(t,xy)\leq c(t,x)c(t,y)$ --- this follows from remarks before Theorem~\ref{ktown}. Therefore we need only treat the case $k=p^s$ for prime $p$.

Note that any upper bound for $|F|$ with some $t=t_0$ is also an upper bound for $t=t_0r^2, r\in (\mathbb{Z}/k\mathbb{Z})^*$. Indeed, if $(f_1,f_2)=t_0r^2$ for each $f_1,f_2\in F$, then $(\frac{1}{r}f_1, \frac{1}{r}f_2)=t_0$ and we can apply a bound for $t=t_0$. Now we are to show that we can choose $r=r(k,t)$ such that $|\Rre (\phi^{tr^2})|$ tends to $\frac{1}{\sqrt{2}}$ if $\frac{k}{\gcd (k,t)}$ tends to infinity --- in this case $|\Rre (\phi^{tr^2}i^l)|$ tends to $\frac{1}{\sqrt{2}}$ as well. 
To obtain this, it's sufficient to choose $r$ such that $|\frac{tr^2}{k/8}|$ tends to $1$ (with suitable choice of  representative of $tr^2$ modulo $k$).

Any $t\in \mathbb{Z}/p^s \mathbb{Z}$ we can consider as $t=p^mt', r\in\mathbb{Z}/p^{s-m}\mathbb{Z}$ and replacing $(t,p^s)$ with $(r, p^{s-m})$ (this doesn't change both ratios in question) we can assume that $t\in (\mathbb{Z}/p^s \mathbb{Z})^*$ (and $\frac{k}{\gcd (k,t)}=k$).

Let $p>2$. Let's prove that we can choose $l$ such that $|l|<k^{\frac{1}{2}}$ and $t=lw^2$ modulo $k$ for some $w$. Suppose that $t$ is non-square modulo $k$ (else we can take $l=1$). If $p=2$, then all non-squares are known to be of the form
$3w^2,5w^2$ or $7w^2$ for some $w$, so we can take $l\in\{1,3,5,7\}$ (when $s>5$). If $p>2$ we can take $l_1,l_2$ with $|l_i|<\sqrt{p}$ such that $l_1=l_2t$ modulo $p$ by Thues lemma. Then for some $i$ $l_i$ is a non-square modulo $m$ and therefore modulo $k=p^s$. As $t$ is non square as well we can write $t=l_iw^2$ and take $l=l_i$.


Now take $x \in \mathbb{Z}$ such that $x^2\in \left (\frac{k}{8|l|} - \left( \frac{k}{|l|} \right)^{\frac{1}{2}}, \frac{k}{8|l|}+ \left (\frac{k}{|l|} \right)^{\frac{1}{2}} \right)$, then
$||lx^2|-\frac{k}{8}| \leq (kl)^{\frac{1}{2}}<k^{\frac{3}{4}}$
and $|\frac{lx^2}{\frac{k}{8}}|$ tends to $1$ when $k$ tends to infinity, as desired (we put $r=\frac{x}{w}$ modulo $k$).

Now we prove the second part of Corollary. Repeat the proof of the corresponding part of Theorem~\ref{ktown}.
Then the right part of~\eqref{13231} is replaced with
\[
\sum_{a=1}^{k-1} \spp  \Rre (\phi^{ta} M) .
\]
Note that for every $a$ 
\[
\spp  \Rre (\phi^{ta} M) = \max ( \Rre (\phi^{ta}) , \Rre(\phi^{ta}\cdot i) , -\Rre(\phi^{ta}) , -\Rre(\phi^{ta}\cdot i) ) \cdot k^{n/2} =
\]
\[\max \left ( \left | \cos \frac{2\pi t a }{k} \right |,\left | \cos  \left ( \frac{2\pi t a }{k} + \frac{\pi}{2} \right ) \right |, \left | \cos  \left ( \frac{2\pi t a }{k} + \pi \right )\right |, \left | \cos  \left ( \frac{2\pi t a }{k} + \frac{3\pi}{2} \right ) \right | \right ) k^{n/2}.
\]
After summation over $1 \leq a \leq k-1$ and taking $k^{n/2}$ out of the brackets and dividing by $k-1$, one has a Riemann sum for the following integral
\[
\frac{\int_{0}^{\pi/4} \cos(x) d x }{\pi/4} = \frac{2\sqrt{2}}{\pi}.
\]

\end{proof}

\section{Discussion}

\subsection{Related subjects}

It turns out that Theorem~\ref{ktown} is closely related with several well-studied themes.

\paragraph{Erd{\H o}s distance problem over a finite field.}
Let $\mathbb{F}_q$ be a finite field. Hart and Iosevich~\cite{hart2008sums} proved that if $t \neq 0$ and $E \subset \mathbb{F}^n_q$, then
\[
|\{(x, y) \in E \times E :  x \cdot y = t\}| = \frac{|E|^2}{q} + R(E),
\]
where
\[
|R(E)| \leq q^\frac{n-1}{2} |E|.
\]
In the notation of Theorem~\ref{ktown} (for this part we assume $k$ to be prime) it implies inequality
\[
|F| \leq \frac{k^\frac{n-1}{2}}{\frac{k-1}{k} - \varepsilon}.
\]
This bound is better than Theorem~\ref{ktown} for a large prime $k$, but it essentially requires the condition $t \neq 0$. The bound for $t = 0$ is obtained in~\cite{hart2011averages} and is slightly weaker than Theorem~\ref{ktown}.

Define the distance in an $n$-dimensional vector space over a finite field as
\[
\dist(\bar x, \bar y) = (x_1 - y_1)^2 + \dots (x_n - y_n)^2,
\]
i.e. we omit the square root in the standard formula.
Let $\Delta(E)$ be the set of all distances between points in $E$.
The Erd{\H o}s distance problem is to determine the smallest possible cardinality of the set $\Delta(E)$ over all sets $E$ of a given size.

In the case of prime $k > 2$ Theorem~\ref{ktown} gives the following elementary bound which is far from the Iosevich--Rudnev estimate~\cite{iosevich2007erdos}.
Let $E$ be a set with $|\Delta(E)| = s$. Without loss of generality $(0,\dots,0) \in E$; otherwise one may shift $E$. 
Then $E$ lies on $s$ spheres centered at $(0,\dots,0)$. 
By a pigeon-hole principle there is a sphere containing a subset $E'\subset E$ of size at least $(|E|-1)/s$. 
Since $k$ is odd and $E'$ lie on a sphere, there are at most $s$ different scalar products between vectors of $E'$.
Let $t$ be a most popular scalar product in $E' \times E'$. Then one may apply Theorem~\ref{ktown} with $\varepsilon = \frac{s-1}{s}$ and receive that
\[
|E'| \leq \frac{k^{n/2}}{1 - \frac{k}{k-1}\frac{s-1}{s}} = \frac{s(k-1)k^{n/2}}{k-s}.
\]
Hence
\[
|E| \leq \frac{s^2(k-1)k^{n/2}}{k-s} + 1.
\]

For instance, $s = k-1$ means that we evaluate the size of a set without a single distance $r$ by $k^{{\frac{n+6}{2}}}$, meanwhile Iosevich--Rudnev estimate is
$Ck^{\frac{n+1}{2}}$ with an absolute constant $C > 0$.

\paragraph{A bound via singular numbers.}
Let $G = (V,E)$ be a simple graph. Suppose that $A$ is a complex matrix such that $A_{ij} = 1$ for every pair $\{i,j\} \notin E(G)$.
Then
\[
\alpha(G) \leq \sigma_{max}(A),
\]
where $\alpha(G)$ is the size of a maximal independent set of $G$ and $\sigma_{max}(A)$ is the maximal singular value of $A$ (i.e. the square root of a
maximal eigenvalue of a self-adjoint operator $A^*A$, where $A^*$ denotes the adjoint of $A$).
The proof immediately follows from the main theorem of~\cite{danciger2006min}.

For matrix $A$ in the proof of Theorem~\ref{ktown} one has
\[
A^*A = kE,
\]
so every singular number of matrix $A$ is equal to $\sqrt{k}$.
Hence every singular number of $M = A^{\otimes n}$ is equal to $k^{n/2}$.
This implies the first inequality of Theorem~\ref{ktown}.

\paragraph{Hypergraph discrepancy and asymptotic precision of Theorem~\ref{theoremop} when $\varepsilon$ is close to $1/2$.}
A hypergraph is a pair $(V, E)$, where $V$ is a finite set whose elements are called vertices and $E$ is a family of subsets of $V$, called edges. 
A vertex $2$-coloring of a hypergraph $(V, E)$ is a map $\pi : V \rightarrow \{\pm 1\}$. 
A \textit{discrepancy} of a coloring $\pi$ is the largest value of $|\sum_{v\in e} \pi (v) |$ over $e \in E$.

Consider an explicit hypergraph $H = (V,E)$, where $V = [N] \times [N]$ and edges have form $I \times J$ for $I,J \subset [N]$. 
Note that every $\{\pm 1\}$ matrix of size $N$ produces a 2-coloring of $H$.
Let $N = 2^n$ and $\chi$ be the coloring from $A^{\otimes n}$ (recall that $A$ is $2\times2$ Hadamard matrix).
Let $I \times J$ be an edge of $H$ providing the discrepancy $\disc$ of $\chi$; without loss of generality let $\disc$ be positive. Then
\[
\disc = (A^{\otimes n} \chi_I, \chi_J) =  (1-2\varepsilon) \cdot |I| \cdot |J|,
\]
where $\varepsilon$ satisfies the conditions of Theorem~\ref{theoremop}. Theorem~\ref{theoremop} implies 
\begin{equation}
    \disc^2 = |I|^2 \cdot |J|^2 (1-2\varepsilon)^2  \leq 2^n |I|\cdot |J| \leq 2^{3n} = N^3.
\label{precise}
\end{equation}

Astashkin proved~\cite{astashkin2010rademacher} that $H$ has the discrepancy at least $cN^{3/2}$ for \textit{every} coloring $\chi$ and some absolute constant $c > 0$, i.e. inequality~(\ref{precise}) is precise up to an absolute constant. It means that Theorem~\ref{theoremop} is precise up to an absolute constant in the case when $|I|$ and $|J|$ are close to $N$ and $\frac{1}{2} - \varepsilon$ is of order $2^{-n/2}$.

\subsection{Further questions}

\paragraph{Generalization on $k$-eventown.}
We say that $F \subset 2^{[n]}$ is $k$-eventown if the size of the intersection of any (not necessarily different) $f_1, f_2 \in F$ is zero modulo $k$.
The problem of determine the maximal size of $k$-eventown was studied by Frankl and Odlyzko~\cite{frankl1983subsets}.
They found a nice construction based on Hadamard matrices of $k$-eventowns of size at least $(ck)^{\lf n/(4k) \rf}$, where $c > 0$ is an absolute constant. 
In addition, they showed that any $k$-eventown has size at most $2^{O(\log k/k)n}$ as $n$ tends to $\infty$. 

In particular, for $k = 3$ the best known lower and upper bounds are $24^{\lf n/12 \rf}$ and $2^{\lf n/2 \rf}$ respectively.

\paragraph{Generalization on $t$-wise $k$-eventown.}
We say that $F \subset 2^{[n]}$ is $t$-wise $k$-eventown if the size of the intersection of any different $f_1,\dots, f_t \in F$ is zero modulo $k$.
Note that a 2-wise $k$-eventown is not the same as an $k$-eventown, since in the former we do not require that the sets themselves
have size zero modulo $k$. 

Sudakov and Vieira~\cite{sudakov2018two} show that a $t$-wise eventown has for $t \geq 3$ a unique extremal configuration and obtain a stability result for this problem. Gishboliner, Sudakov and Tomon~\cite{gishboliner2021small} show that for every $k$ there is $t = t(k)$ that
the size of any $t$-wise $k$-eventown is bounded by
\[
|F| \leq 2^{[n/k]} + const(k,t).
\]

A generalization of the methods of the paper, if exists, will have a deal with some tensor analysis.

\paragraph{Acknowledgments.} The authors are grateful to Fedor Petrov, Andrey Kupavskii and Pavel Prozorov for useful discussions. 
The reviewers remarks were extremely helpful. The work of Danila Cherkashin was supported by the Russian Science Foundation grant 21-11-00040.

\bibliographystyle{plain}
\bibliography{main.bib}

\begin{thebibliography}{10}

\bibitem{astashkin2010rademacher}
Sergey~V. Astashkin.
\newblock Rademacher functions in symmetric spaces.
\newblock {\em Journal of Mathematical Sciences}, 169(6):725--886, 2010.

\bibitem{berlekamp1969subsets}
E.~R. Berlekamp.
\newblock On subsets with intersections of even cardinality.
\newblock {\em Canadian Mathematical Bulletin}, 12(4):471--474, 1969.

\bibitem{danciger2006min}
Jeffrey Danciger.
\newblock A min--max theorem for complex symmetric matrices.
\newblock {\em Linear Algebra and its Applications}, 412(1):22--29, 2006.

\bibitem{frankl1983subsets}
Peter Frankl and Andrew~M. Odlyzko.
\newblock On subsets with cardinalities of intersections divisible by a fixed
  integer.
\newblock {\em European Journal of Combinatorics}, 4(3):215--220, 1983.

\bibitem{gishboliner2021small}
Lior Gishboliner, Benny Sudakov, and Istv{\'a}n Tomon.
\newblock Small doubling, atomic structure and $\ell$-divisible set families.
\newblock {\em arXiv preprint arXiv:2103.16479}, 2021.

\bibitem{graver1975boolean}
Jack~E. Graver.
\newblock Boolean designs and self-dual matroids.
\newblock {\em Linear Algebra and its Applications}, 10(2):111--128, 1975.

\bibitem{haemers2001bicliques}
Willem~H. Haemers.
\newblock Bicliques and eigenvalues.
\newblock {\em Journal of Combinatorial Theory, Series B}, 82(1):56--66, 2001.

\bibitem{haemers2021hoffman}
Willem~H. Haemers.
\newblock Hoffman's ratio bound.
\newblock {\em Linear Algebra and its Applications}, 617:215--219, 2021.

\bibitem{hart2008sums}
Derrick Hart and Alex Iosevich.
\newblock Sums and products in finite fields: an integral geometric viewpoint.
\newblock {\em Radon Transforms, Geometry, and Wavelets}, 464:129--135, 2008.

\bibitem{hart2011averages}
Derrick Hart, Alex Iosevich, Doowon Koh, and Misha Rudnev.
\newblock Averages over hyperplanes, sum-product theory in vector spaces over
  finite fields and the {E}rd{\H{o}}s--{F}alconer distance conjecture.
\newblock {\em Transactions of the American Mathematical Society},
  363(6):3255--3275, 2011.

\bibitem{iosevich2007erdos}
Alex Iosevich and Misha Rudnev.
\newblock Erd{\H o}s distance problem in vector spaces over finite fields.
\newblock {\em Transactions of the American Mathematical Society},
  359(12):6127--6142, 2007.

\bibitem{lovasz1979shannon}
L{\'a}szl{\'o} Lov{\'a}sz.
\newblock On the {S}hannon capacity of a graph.
\newblock {\em IEEE Transactions on Information theory}, 25(1):1--7, 1979.

\bibitem{oNeill2021towards}
Jason O'Neill.
\newblock Towards supersaturation for oddtown and eventown.
\newblock {\em arXiv preprint arXiv:2109.09925}, 2021.

\bibitem{sudakov2018two}
Benny Sudakov and Pedro Vieira.
\newblock Two remarks on eventown and oddtown problems.
\newblock {\em SIAM Journal on Discrete Mathematics}, 32(1):280--295, 2018.

\end{thebibliography}

\end{document}